\newcommand{\nc}{\newcommand}
\nc{\bC}{\bold{C}} \nc{\bN}{\Bbb{N}} \nc{\cF}{\mathcal{F}}
\nc{\cE}{\mathcal{E}} \nc{\cR}{\mathcal{R}} \nc{\cM}{\mathcal{M}}
\nc{\al}{\alpha} \nc{\bt}{\beta} \nc{\gm}{\gamma} \nc{\dl}{\delta}
\nc{\om}{\omega} \nc{\sg}{\sigma} \nc{\Sg}{\Sigma} \nc{\vf}{\varphi}
\nc{\ve}{\varepsilon} \nc{\os}{\overset} \nc{\ol}{\overline}
\nc{\ul}{\underline} \nc{\us}{\underset} \nc{\sbs}{\subset}
\nc{\bsl}{\backslash} \nc{\Ra}{\Rightarrow}
\nc{\lra}{\longrightarrow} \nc{\all}{\allowdisplaybreaks}
\nc{\Codes}{\operatorname{{\bold{Codes}}}}
\nc{\RegMono}{\operatorname{\mathcal{R}{\rm{eg}\mathcal{M}{\rm{ono}\!}}}}
\nc{\RegEpi}{\operatorname{\mathcal{R}{\rm{eg}\mathcal{E}{\rm{pi}\!}}}}
\nc{\Mn}{\operatorname{\mathcal{M}{\rm{ono}\!}}}
\nc{\Ep}{\operatorname{\mathcal{E}{\rm{pi}\!}}}
\nc{\Rg}{\operatorname{\mathcal{R}{\rm{eg}\!}}}
\nc{\Ob}{\operatorname{Ob\!}}
\numberwithin{equation}{section}
\newtheorem{theo}{\ \ \ Theorem}[section]
\newtheorem{lem}[theo]{\ \ \ Lemma}
\newtheorem{prop}[theo]{\ \ \ Proposition}
\newtheorem{cor}[theo]{\ \ \ Corollary}
\newtheorem{definition}[theo]{\ \ \ Definition}
\theoremstyle{definition}
\newtheorem{exmp}[theo]{\ \ \ Example}
\theoremstyle{remark}
\newtheorem{rem}[theo]{\ \ \ Remark}
\begin{document}

\title[]
{Associative Protomodular Algebras}

\author{Dali Zangurashvili}

\maketitle

\begin{abstract}
 The
notion of associativity (which differs from the straightforward
generalization of the usual associativity given by the move of
parentheses in the relevant expression) for operations of high arity
is introduced. It is proved that the algebraic theory of a variety
of universal algebras contains a group operation if and only if it
contains a semi-abelian operation which is associative in the sense
introduced.
\bigskip

\noindent{\bf Key words and phrases}: associativity for operations
of high arity; protomodular variety; semi-abelian variety.

\noindent{\bf 2000  Mathematics Subject Classification}: 08B05,
18C05, 18C10.
\end{abstract}

\section{Introduction}


 In \cite{JP}, among many other things P. T. Johnstone and C. M. Pedicchio noted that
 an algebraic theory (with a constant) contains
 a group operation if and only if it contains a
 Mal'cev operation which is associative in the sense of \cite{JP}. Note also that there are two important classes of Mal'cev varieties
-- the classes of protomodular and semi-abelian varieties --
  which, similarly to the case of Mal'cev varieties,
have purely syntactical characterizations due to D. Bourn and G.
Janelidze \cite{BJ}. Recall that the notions of protomodular and
semi-abelian varieties were derived from the corresponding
categorical notions introduced by D. Bourn \cite{B} and G.
Janelidze, L. Marki, and W. Tholen \cite{JMT}, respectively, as
abstract settings in which many properties of groups remain valid.
The Bourn-Janelidze characterizations require the existence of one
operation of arbitrarily high arity (called a
protomodular/semi-abelian operation), together with some binary
operations
 and constants that satisfy certain identities
 (which, in fact, were originally considered by A.Ursini \cite{U}, but for different
 purposes). In view of the above-said a natural question arises whether there is an
analog
 of the Johnstone-Pedicchio statement which would impose an "associativity-like" condition on
 a protomodular/semi-abelian operation.

 Since the Bourn-Janelidze characterizations enable us to construct explicitly the Mal'cev term
in a protomodular/semi-abelian
 variety, one can attempt to find such an analog just by plugging the explicit form of the Mal'cev term in
the associativity identity mentioned in the Johnstone-Pedicchio
statement. However, the condition obtained in this way does not
provide the basis to be interpreted as a kind of higher
associativity, since, for instance,
 it contains not only the protomodular/semi-abelian operation, but
 also the binary operations and constants
 from the Bourn - Janelidze's result.

 In the present paper, we give the
 analog of the Johnstone-Pedicchio statement whose formulation requires only a kind of higher
 associativity condition on a semi-abelian term, and does not involve any other operations. To this end, we first introduce the notion of 2-associativity for
operations of high arity which is a generalization of the usual
associativity condition (but is different from its straightforward
generalization given by the move of parentheses in the relevant
expression; the term "1-associativity" is left for the latter
generaization). The point here is that the associativity for a
binary operation $\times$ on a set $A$ can be formulated as the
condition that the mapping $(-\times a)$ from $A$ to the algebra of
mappings $Map(A,A)$ (with the composition operation) preserves the
operation $\times$, for any $a\in A$. Generalizing this condition to
the case of operations $\theta$ of high arity we get the notion of
2-associativity (see Section 3). It is equivalent to the identity

$$\theta(a_{1},a_{2},...,a_{n},\theta(b_{1},b_{2},...,b_{n},b))=$$
$$=\theta(\theta(a_{1},a_{2},...,a_{n},b_{1}),\theta(a_{1},a_{2},...,a_{n},b_{2}),...,\theta(a_{1},a_{2},...,a_{n},b_{n}),b).$$\vskip+2mm

\noindent An example of such an operation is given by the
protomodular operation of the algebraic theory of the variety of
Boolean algebras: $\theta(a,b,c)=(a\vee c)\wedge b$. In fact, this
operation, like a similar one $\theta(a,b,c)=(a\vee b)\wedge c$, is
2-associative in any distributive lattice.

The main result of the present paper asserts that \emph{the
algebraic theory of a variety of universal algebras contains a group
operation if and only if it contains a 2-associative semi-abelian
term $\theta$. In that case the group operation is defined by}

$$a\cdot b=\theta(a,a,...,a,b),$$

\noindent \emph{its unit is $e$ and the inverse of an element $a$ is
given by}

\begin{equation}
a^{-1}=
\end{equation}
$$=\theta(\alpha_{1}(e,\theta(a,a,...,a)),\alpha_{2}(\theta(e,\theta(a,a,...,a)),...,\alpha_{n}(e,
\theta(a,a,...,a)),a),
$$
\noindent \emph{where $\alpha_{i}$ are the binary operations and $e$
is the constant from the Bourn-Janelidze characterization of
protomodular varieties.}

If we look at 2-associative algebras of the simplest semi-abelian
variety as high arity analogs of groups, then this result implies
that the "$n$-arity groups" are nothing else but the so-called
$n$-enriched groups (in the sense of Section 4 of this paper).

\section{Preliminaries}

For the definitions of a protomodular category and a semi-abelian
category we refer the reader to the papers \cite{B} by D. Bourn and
\cite{JMT} by G. Janelidze, L. Marki, and W. Tholen, respectively.

The characterizations of protomodular and semi-abelian varieties of
universal algebras were found by D. Bourn and G. Janelidze in
\cite{BJ}. Namely, they proved that
a variety of universal algebras $\mathbb{V}$ is protomodular if and
only if its algebraic theory contains, for some natural $n$,
constants $e_{1}, e_{2},...,e_{n}$, binary operations $\alpha_{1}$,
$\alpha_{2}$,..., $\alpha_{n}$ and an $(n+1)$-ary operation $\theta$
such that the following identities are satisfied:

\begin{equation}
\alpha_{i}(a,a)=e_{i};
\end{equation}

\begin{equation}
\theta(\alpha_{1}(a,b),\alpha_{2}(a,b),...,\alpha_{n}(a,b),b)=a.
\end{equation}
\vskip+3mm

\noindent $\mathbb{V}$ is semi-abelian if and only if its signature
contains a unique constant $e$, and (2.1) and (2.2) are satisfied
for $e_{1}=e_{2}=...=e_{n}=e$.

For simplicity, algebras from a protomodular (resp. semi-abelian)
variety are called protomodular (resp. semi-abelian). The operation
$\theta$ satisfying (2.2) for some $\alpha_{i}$ and $e_{i}$ which in
their turn satisfy (2.1) is called protomodular. A protomodular
operation is called semi-abelian if all $e_{i}$ are equal.

The motivating example of a semi-abelian variety is the variety of
groups; in that case we have
$$\theta(a,b)=ab,$$
$$\alpha_{1}(a,b)=a/b,$$
$$e_{1}=1.$$

\noindent Similarly, any variety whose algebraic theory contains a
group operation is protomodular. The varieties of Boolean algebras
and Heyting algebras are protomodular too. As is well-known, the
algebraic theory of the former variety has a group operation:
$$\theta(a,b)=(a\wedge \rceil b)\vee (b\wedge\rceil a).$$
\noindent Another protomodular operation of this algebraic theory
\cite{BC2} is given by :

$$\theta(x,y,z)=(x\vee z)\wedge y,$$
\noindent with

$$\alpha_{1}(x,y)=(x\wedge \rceil y),$$
$$\alpha_{2}(x,y)=(x\vee  \rceil y),$$
$$e_{1}=0,e_{2}=1,$$

\noindent For a protomodular operation of the variety of Heyting
algebras we refer the reader to \cite{J}; this operation in fact is
semi-abelian (nevertherless the variety of Heyting algebras is not
semi-abelian!). The same operation makes the variety of Heyting
semi-lattices semi-abelian \cite{J}. Other examples of semi-abelian
varieties are given by the varieties of loops, left/right
semiloops\footnote{For loops and left semiloops, the semi-abelian
operation and the relevant binary operation are the same as in the
case of groups.}, and locally Boolean distributive lattices
\cite{BC1}.

The identities (2.1) and (2.2)  imply \cite{BC2}:\vskip+2mm



\begin{equation}
\theta(e_{1},e_{2},...,e_{n},a)=a.
\end{equation}
\vskip+2mm


\section{Associativity Conditions for Operations of High Arity}

 Note that the associativity condition for a binary operation $\theta$ on a set  $A$ is equivalent to the condition that the mapping $F:A\rightarrow Map(A,A)$ defined as $F(b)=\theta_{b}=\theta(-,b)$
 is a homomorphism (when the set of all mappings $Map(A,A)$
is equipped with the composition operation). Taking this observation
into account, below we define the 2-associativity condition for
operations of any arities.

    Let $A$ be a set equipped with an $(n+1)$-ary operation $\theta$, and let $Map(A^{n},A)$ be the set of all mappings $A^{n}\rightarrow A$.
Let us introduce the $(n+1)$-ary operation $\theta$ on
$Map(A^{n},A)$ as follows: for any $f_{1},f_{2},...,f_{n},g\in
Map(A^{n},A)$, we define $\theta(f_{1},f_{2},...,f_{n},g)$ as the
composition of mappings $(f_{1},f_{2},...,f_{n}):A^{n}\rightarrow
A^{n}$ and $g:A^{n}\rightarrow A$.

           We have the mapping  $F:A\rightarrow Map(A^{n},A)$ defined as  $F(b)=\theta_{b}$, which sends an $n$-tuple $(a_{1},a_{2},...,a_{n})$ to $\theta(a_{1},a_{2},...,a_{n},b)$.

       \begin{lem} The following conditions are equivalent:

(a) The mapping  $F$ preserves $\theta$;

(b)  For any $a_{1},a_{2},...,a_{n}, b_{1},b_{2},...,b_{n},c\in A$,
one has
\begin{equation}
\theta(a_{1},a_{2},...,a_{n},\theta(b_{1},b_{2},...,b_{n},c))=
\end{equation}
$$=\theta(\theta(a_{1},a_{2},...,a_{n},b_{1}),\theta(a_{1},a_{2},...,a_{n},b_{2}),...,\theta(a_{1},a_{2},...,a_{n},b_{n}),c).$$
\end{lem}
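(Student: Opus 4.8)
The plan is to unwind both conditions to the same explicit identity and observe they literally coincide. The operation $\theta$ on $Map(A^n,A)$ is defined so that $\theta(f_1,\dots,f_n,g)$ is the composite $g\circ(f_1,\dots,f_n)$; that is, for any $n$-tuple $(a_1,\dots,a_n)\in A^n$,
\begin{equation}
\theta(f_1,\dots,f_n,g)(a_1,\dots,a_n)=g\bigl(f_1(a_1,\dots,a_n),\dots,f_n(a_1,\dots,a_n)\bigr).
\end{equation}
First I would write out what it means for $F$ to preserve $\theta$. Given $b_1,\dots,b_n,c\in A$, preservation says $F(\theta(b_1,\dots,b_n,c))=\theta(F(b_1),\dots,F(b_n),F(c))$, an equation between two elements of $Map(A^n,A)$.

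Next I would evaluate both sides at an arbitrary $(a_1,\dots,a_n)\in A^n$. By definition of $F$, the left-hand side evaluated at $(a_1,\dots,a_n)$ is $\theta(a_1,\dots,a_n,\theta(b_1,\dots,b_n,c))$, which is exactly the left-hand side of (3.1). For the right-hand side, I apply the displayed formula for $\theta$ on $Map(A^n,A)$: it equals $F(c)\bigl(F(b_1)(a_1,\dots,a_n),\dots,F(b_n)(a_1,\dots,a_n)\bigr)$. Since $F(b_i)(a_1,\dots,a_n)=\theta(a_1,\dots,a_n,b_i)$ and $F(c)(x_1,\dots,x_n)=\theta(x_1,\dots,x_n,c)$, this is precisely
$$\theta\bigl(\theta(a_1,\dots,a_n,b_1),\dots,\theta(a_1,\dots,a_n,b_n),c\bigr),$$
the right-hand side of (3.1).

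Finally I would note that two mappings in $Map(A^n,A)$ are equal if and only if they agree on every argument $(a_1,\dots,a_n)\in A^n$. Hence $F$ preserves $\theta$, i.e. the equality in $Map(A^n,A)$ holds, if and only if identity (3.1) holds for all $a_1,\dots,a_n,b_1,\dots,b_n,c\in A$. This establishes the equivalence of (a) and (b). There is no real obstacle here: the proof is a pure bookkeeping exercise, and the only thing to be careful about is keeping the roles of the $a_i$ (the point of evaluation in $A^n$) and the $b_i,c$ (the data fed into $F$) straight, together with the order of composition in the definition of $\theta$ on $Map(A^n,A)$. I would present it as a short direct computation rather than as two separate implications.
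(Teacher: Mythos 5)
Your proof is correct and is exactly the definition-unwinding argument the paper has in mind (the paper in fact omits the proof of Lemma 3.1 as immediate): evaluating the preservation equation $F(\theta(b_1,\dots,b_n,c))=\theta(F(b_1),\dots,F(b_n),F(c))$ pointwise at $(a_1,\dots,a_n)$ yields identity (3.1) verbatim. No issues.
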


       \begin{definition} Let $A$ be a set and let $\theta$ be an $(n+1)$-ary operation on $A$. $\theta$ is called 2-associative
        if the equivalent conditions of Lemma 3.1 are satisfied.\end{definition}
       When $n=1$, the 2-associativity obviously is equivalent to the associativity in the usual
       sense.
       \vskip+3mm

       \begin{rem}
       We left the term "1-associativity" for the straightforward
       generalization of the usual associativity condition given by
       the move of parentheses, i.e. for the following condition: moving the internal symbol $\theta$ together with the attached parentheses in
the expression
 $\theta(a_{1},a_{2},...,a_{n},\theta({b_{1},b_{2},...,b_{n},c}))$ to any place,
 one obtains the same element, for any  $a_{1},a_{2},...,a_{n},b_{1},b_{2},...,b_{n},c\in A$.

An example of an 1-associative $(n+1)$-ary operation is given by the
operation
$$\theta(a_{1},a_{2},...,a_{n},b)=a_{1}a_{2}
...a_{n}b$$

\noindent on a semigroup.\vskip+2mm
\end{rem}

       Let us now give examples of 2-associative operations.

       \begin{exmp} (a) Let $A$ be a set. The operation $\theta$ on the set $Map(A^{n},A)$
defined above is 2-associative. More generally, let $\mathcal{C}$ be
a category with finite products. Consider any object $C$ and
$Mor(C^{n},C)=\{f:C^{n}\rightarrow C\}$, with the operation $\theta$
defined similarly to the case of a set $C$. The operation $\theta$
is 2-associative.\vskip+2mm

(b) Let $A$ be any set, and $i$ and $n$ be any natural numbers with
$1\leq i\leq (n+1)$. Let
$$\theta(a_{1},a_{2},...,a_{n},a_{n+1})=a_{i}.$$

\noindent Then $\theta$ is a 2-associative operation.\vskip+2mm

(c) Let A be a semigroup, and $i$ and $n$ be any natural numbers
with $1\leq i\leq n$. Let us introduce the $(n+1)$-ary operation by
$$\theta(a_{1},a_{2},...,a_{n},b)= a_{i}b.$$
\noindent Then $\theta$ is 2-associative. \vskip+2mm

(d) Let $A$ be the set of all $(n+1)\times (n+1)$ matrices. Let
$\theta$ sends an $(n+1)$-tuple $(M_{1}, M_{2},...,M_{n+1})$ to the
matrix whose $i$th row is the $i$th row of the matrix $M_{i}$. This
operation is 2-associative.

\vskip+2mm

(e) Let $A$ be a commutative monoid such that the order of any
element divides $(n-1)$. Then the operation $\theta$ defined by

$$\theta(a_{1},a_{2},...,a_{n},b)=a_{1}+a_{2}+...+a_{n}+b$$

\noindent is both 1- and 2-associative.

\end{exmp}\vskip+3mm

          \begin{exmp} Let $A$ be a set, and $\theta$ be a ternary operation. Let, for any $x\in A$, $\circ_{x}$ be the binary operation
defined by
$$a\circ_{x} b=\theta(a,x,b).$$
\noindent Then $\theta$ is
2-associative if and only if
$$a\circ_{x}(b\circ_{y}c)=(a\circ_{x}b)\circ_{(a_{\circ_{x}}y)} c,$$

\noindent for any $a,b,c,x,y\in A$. We have at least two such
operations on a distributive lattice. One of them is defined by

\begin{equation}
\theta(a,b,c)=(a\vee b)\wedge c,
\end{equation}

\noindent and the other by

\begin{equation}
\theta(a,b,c)=(a\vee c)\wedge b.
\end{equation}

\noindent Recall that (3.3) is a promotodular operation in the
algebraic theory of the variety  of Boolean algebras
\cite{BC2}.\end{exmp}\vskip+3mm

To construct further examples of 2-assocative protomodular algebras,
below we give a simple lemma.

Let $\textbf{V}_{n}$ be the simplest protomodular  variety, i.e. the
variety with the signature  $\mathfrak{F}_{n}$ containing only one
$(n+1)$-ary operation symbol $\theta$, the binary operation symbols
$\alpha_{1},\alpha_{2},...,\alpha_{n}$, and the constants
$e_{1},e_{2},...,e_{n}$,
           where the identities are (2.1) and (2.2). Similarly, we denote by $\overline{\textbf{V}}_{n} $ the simplest
semi-abelian variety.

          \begin{lem} Let $A$ be a nonempty set, and let $\theta:A^{n+1}\rightarrow A$ be any mapping.

          (a) Let $e_{1},e_{2},...,e_{n}\in A$. There are binary operations $\alpha_{i}$ $(1\leq i\leq n)$ on $A$
such that

\begin{equation}
(A,\theta, (\alpha_{i})_{1\leq i\leq n}, (e_{i})_{1\leq i\leq
n})
\end{equation}
\vskip+2mm

\noindent  is a $\textbf{V}_{n}$-algebra if and only if, for any
$b\in A$, the mapping $\theta_{b}$ is surjective and

\begin{equation}
\theta(e_{1},e_{2},...,e_{n},b)=b.
\end{equation}
\vskip+2mm

In particular, if $\theta$ satisfies (3.5) and the identity

\begin{equation}
\theta(a,a,...,a,b)=a,
\end{equation}

\noindent then (3.4) is a $\textbf{V}_{n}$-algebra\footnote{If
$e_{1}=e_{2}=...=e_{n}$, then (3.5) and (3.6) imply that $A$ is one
element set.}, for some $\alpha_{i}$ $(1\leq i\leq n)$. \vskip+2mm

(b) Let $e\in A$. There are binary operations $\alpha_{i}$ $(1\leq
i\leq n)$ on $A$ such that

\begin{equation}
(A, \theta, (\alpha_{i})_{1\leq i\leq
n}, e)
\end{equation}
\vskip+2mm

\noindent is an $\overline{\textbf{V}}_{n} $-algebra if and only if
for any $b\in A$, the mapping $\theta_{b}$ is surjective and

\begin{equation}
\theta(e,e,...,e,b)=b.
\end{equation}
\vskip+2mm

\end{lem}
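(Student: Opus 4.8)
The plan is to prove both equivalences by one elementary argument and to obtain part (b) as the special case $e_1=\dots=e_n=e$ of part (a); the "in particular" clause and the footnote will then fall out by short computations. Nothing here relies on $\theta$ being $2$-associative, so that notion plays no role in the proof.

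For the forward implication of (a), I would assume that some family of binary operations $\alpha_i$ makes $(3.4)$ a $\textbf{V}_n$-algebra and read the two required conclusions straight off the defining identities $(2.1)$ and $(2.2)$: fixing $b$ and letting $a$ range over $A$, identity $(2.2)$ exhibits every $a$ as $\theta(\alpha_1(a,b),\dots,\alpha_n(a,b),b)$, so $\theta_b$ is surjective; and specialising $(2.2)$ to $a=b$ and using $\alpha_i(b,b)=e_i$ from $(2.1)$ gives $\theta(e_1,\dots,e_n,b)=b$, which is exactly $(3.5)$ (this is just identity $(2.3)$ recalled in the Preliminaries). For the converse, I would assume that every $\theta_b$ is surjective and that $(3.5)$ holds, and construct the $\alpha_i$ by choice: for each ordered pair $(a,b)\in A\times A$ the fibre $\theta_b^{-1}(a)\subseteq A^n$ is nonempty, so one application of the axiom of choice over the index set $A\times A$ selects a point of each fibre, subject to the single proviso that for a diagonal pair $(a,a)$ the chosen point is $(e_1,\dots,e_n)$ — a legitimate choice precisely because $(3.5)$ says $(e_1,\dots,e_n)\in\theta_a^{-1}(a)$. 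Defining $\alpha_i(a,b)$ to be the $i$-th coordinate of the chosen point, identity $(2.2)$ holds by the fibre condition and identity $(2.1)$ holds by the diagonal proviso, so $(3.4)$ is a $\textbf{V}_n$-algebra.

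The remaining assertions are then immediate. For the "in particular" clause, identity $(3.6)$ gives $\theta_b(a,a,\dots,a)=a$ for every $a$, so each $\theta_b$ is surjective, and $(3.5)$ is assumed; hence part (a) applies. For the footnote, putting $a=e$ with $e=e_1=\dots=e_n$ in $(3.6)$ and comparing with $(3.5)$ yields $b=\theta(e,\dots,e,b)=e$ for every $b$. Part (b) is the verbatim repetition of the above argument with every $e_i$ replaced by the single constant $e$, using the single constant's analogue $(3.8)$ of $(3.5)$.

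I do not expect a genuine obstacle. The only subtle point — and really the whole content of the lemma — is that $(2.1)$ and $(2.2)$ must be realised simultaneously by the same family $(\alpha_i)$, and that hypothesis $(3.5)$ is exactly the compatibility condition making this possible: it is what guarantees that the diagonal value $(e_1,\dots,e_n)$ forced by $(2.1)$ can be placed inside the fibres $\theta_a^{-1}(a)$ forced by $(2.2)$. I would also be explicit that a single invocation of choice, indexed by $A\times A$, suffices, so no transfinite bookkeeping is needed.
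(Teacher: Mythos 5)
Your proof is correct. The paper states this lemma without proof, and your argument---reading surjectivity of $\theta_b$ and identity (3.5) off (2.1) and (2.2) for the forward direction, and for the converse choosing a point of each nonempty fibre $\theta_b^{-1}(a)$ with the chosen point on the diagonal forced to be $(e_1,\dots,e_n)$ (legitimate precisely by (3.5))---is evidently the intended one; the ``in particular'' clause, the footnote, and part (b) follow exactly as you say.
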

\vskip+3mm

 \begin{exmp}
  Lemma 3.6 implies that if A from (c) of Example 3.4 is a group, then  it can be turned into a 2-associative $\overline{\textbf{V}}_{n}
$-algebra, for any $n$. Now consider any groups $A_{1}, A_{2},...,
A_{k}$. $A_{1,i_{1}}\times A_{2,i_{2}}\times...\times A_{k,i_{k}}$
can be turned into a 2-associative $\overline{\textbf{V}}_{n}
$-algebra, for any naturals $i_{1},i_{2},..., i_{k}$ not exceeding
$n$. Here $A_{j,i_{j}}$ denotes the set $A_{j}$ with the operation
defined in (c) of Example 3.4 for $i=i_{j}$.\vskip+2mm

\end{exmp}

\vskip+3mm

\begin{exmp}
Any distributive lattice with top and bottom elements has the
structure of a 2-associative $\textbf{V}_{2}$-algebra. Indeed,
$\theta$ given by (3.3) satisfies (3.5) and (3.6).
\end{exmp}
\vskip+3mm

\begin{exmp}
Let $\mathcal{C}$ be a category with finite products and let
$Mor_{\Delta}(C^{n},C)$ denote the set of retractions of the
diagonal morphism $\triangle:C\rightarrow C^{n}$. It is closed under
the operation $\theta$ considered in Example 3.4(a). Let $e_{i}$ be
the $i$-th projection $\pi_{i}:C^{n}\rightarrow C$. Then (3.5) and
(3.6) are satisfied since
$$\theta(f,f,...,f,g)=g(f,f,...,f)=g\triangle f=f.$$

\noindent In this way $Mor_{\Delta}(C^{n},C)$ turns into a
2-associative $\textbf{V}_{n} $-algebra.
\end{exmp}
\vskip+2mm

\begin{rem}

 One can show that the semi-abelian operations of
Heyting algebras, Heyting semi-lattices, and locally Boolean
distributive algebras given in \cite{J}, \cite{BC1} are neither
1-associative nor 2-associative, provided that the algebras are not
trivial. Moreover, neither of the operations given by (3.2) and
(3.3) on a non-trivial lattice is 1-associative. However, as noted
in Example 3.5, both operations are 2-associative.

\end{rem}
\vskip+3mm

\begin{rem}
A Mal'cev operation can be 1-associative (for instance, in the
variety of groups). However, it is 2-associative only for trivial
algebras\footnote{Indeed, we have $\mu(a,b,\mu(b,b,b))=a$ and
$\mu(\mu(a,b,b),\mu(a,b,b),b)=b$.}.
\end{rem}

To give further negative examples, let us introduce the notion of a
strict protomodular algebra. First observe that the identity (2.2) implies that
the equation

\begin{equation}
\theta_{b}(x_{1},x_{2},...,x_{n})=\theta(x_{1},x_{2},...,x_{n},b)=a
\end{equation}

\noindent has a solution, for any $a$, $b\in A$. \vskip+3mm

\begin{lem} Let $A$ be an algebra from a protomodular variety. The following conditions are equivalent:

(i) $\theta_{b}$ is a bijection, for any $b\in A$;\vskip+2mm

(ii) the equation (3.9) has a unique solution for any $a,b\in
A$;\vskip+2mm

(iii) the system of equations

$$\alpha_{i}(x,b)=a_{i}$$

has a (unique) solution, for any $b, a_{i}\in A$ $(1\leq i\leq
n)$.\vskip+2mm

(iv) the following identity is satisfied in $A$:

\begin{equation}
\alpha_{i}(\theta(a_{1},a_{2},...,a_{n},b),b)=a_{i},
\end{equation}

for any $i$ (1$\leq i\leq n$).

\end{lem}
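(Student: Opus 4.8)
The plan is to read the basic identity (2.2) as a single equation between two maps, after which conditions (i)--(iv) become elementary reformulations of one another. For $b\in A$ introduce, alongside $\theta_{b}\colon A^{n}\to A$, the map $s_{b}\colon A\to A^{n}$ given by $s_{b}(a)=(\alpha_{1}(a,b),\alpha_{2}(a,b),\dots,\alpha_{n}(a,b))$. Identity (2.2) says precisely that $\theta_{b}\circ s_{b}=\mathrm{id}_{A}$; in particular, independently of which of (i)--(iv) holds, $\theta_{b}$ is always surjective and $s_{b}$ is always injective.

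Next I would translate the four conditions in terms of $\theta_{b}$ and $s_{b}$. Condition (i) is ``$\theta_{b}$ is bijective'', and, $\theta_{b}$ being automatically surjective, this is the same as ``$\theta_{b}$ is injective'', i.e.\ condition (ii) (the equation (3.9) has exactly one solution for each $a$). Condition (iii) says that $s_{b}$ is surjective; since $s_{b}$ is automatically injective this is the same as ``$s_{b}$ is bijective'' --- and it is here that the parenthetical uniqueness in (iii) comes for free, because $\alpha_{i}(x,b)=a_{i}$ for all $i$ forces $x=\theta(a_{1},\dots,a_{n},b)$ by (2.2). Finally, writing the equation $s_{b}\circ\theta_{b}=\mathrm{id}_{A^{n}}$ out componentwise gives exactly identity (3.10), so condition (iv) reads ``$s_{b}\circ\theta_{b}=\mathrm{id}_{A^{n}}$''.

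It then remains to invoke the purely formal fact that, with $\theta_{b}\circ s_{b}=\mathrm{id}_{A}$ held fixed, the three statements ``$\theta_{b}$ bijective'', ``$s_{b}$ bijective'' and ``$s_{b}\circ\theta_{b}=\mathrm{id}_{A^{n}}$'' are mutually equivalent: if $\theta_{b}$ is bijective then $s_{b}=\theta_{b}^{-1}$, hence $s_{b}$ is bijective and $s_{b}\circ\theta_{b}=\mathrm{id}_{A^{n}}$; if $s_{b}$ is bijective then $\theta_{b}=s_{b}^{-1}$ is bijective; and $s_{b}\circ\theta_{b}=\mathrm{id}_{A^{n}}$ provides a left inverse for $\theta_{b}$, which together with surjectivity makes $\theta_{b}$ bijective. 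Chaining this with the translations of the previous paragraph yields (i) $\Leftrightarrow$ (ii) $\Leftrightarrow$ (iii) $\Leftrightarrow$ (iv).

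There is no genuine obstacle here: the whole point is to notice that (2.2) is the identity $\theta_{b}\circ s_{b}=\mathrm{id}_{A}$ and that (3.10) is its companion $s_{b}\circ\theta_{b}=\mathrm{id}_{A^{n}}$, after which everything reduces to the observation that a one-sided inverse of a surjection (or of an injection) is automatically two-sided. A reader preferring to avoid even this mild use of categorical language can instead unwind the argument directly: (ii) $\Rightarrow$ (iv) because, taking $a=\theta(a_{1},\dots,a_{n},b)$, both $(a_{1},\dots,a_{n})$ and $(\alpha_{1}(a,b),\dots,\alpha_{n}(a,b))$ solve (3.9) and hence coincide; (iv) $\Rightarrow$ (iii) by taking $x=\theta(a_{1},\dots,a_{n},b)$; (iii) $\Rightarrow$ (ii) because any solution $z$ of $\alpha_{i}(z,b)=x_{i}$ equals $\theta(x_{1},\dots,x_{n},b)$ by (2.2), pinning $x_{i}$ down; and (i) $\Leftrightarrow$ (ii) as already noted.
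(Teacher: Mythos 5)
Your proof is correct; the paper actually states this lemma without proof, and your argument — reading (2.2) as $\theta_{b}\circ s_{b}=\mathrm{id}_{A}$ and (3.10) as $s_{b}\circ\theta_{b}=\mathrm{id}_{A^{n}}$, then using that a one-sided inverse of a surjection (resp.\ injection) is two-sided — is exactly the evident intended one. Both the abstract version and your elementary unwinding of the implications check out.
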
\vskip+3mm

\begin{definition} A protomodular operation $\theta$ is called strict if the
equivalent conditions of Lemma 3.12 are satisfied. If it is clear
which protomodular operation we are dealing with, then the term
"strict" will be referred to an algebra.
\end{definition}\vskip+2mm

Obviously, if $n>2$, then any strict protomodular algebra is
infinite.\vskip+2mm

\begin{exmp} $\textbf{V}_{n}$-algebras with the strict operation $\theta$ from the signature can be easily described. Consider any infinite set $A$, and,
 for any $b\in A$,
choose a bijection $\theta_{b}:A\times A\times ...\times
A\rightarrow A$ such that

$$\theta_{b}(e_{1},e_{2},...,e_{n})=b.$$

\noindent  Let us take
$$\alpha_{i}(a,b)=\pi_{i}\theta^{-1}_{b}(a),$$
\noindent  where $\pi_{i}$ is the $i$-th projection $A\times A\times
...\times A\rightarrow A$. In this way $A$ turns into a strict
$\textbf{V}_{n}$-algebra. Obviously, any strict
$\textbf{V}_{n}$-algebra can be given in this way.
\end{exmp}\vskip+5mm

Let $\textbf{V}$ be a protomodular variety, and  $\theta$ be its
protomodular operation.
\begin{exmp} Let $n=1$. A protomodular operation $\theta$ on an algebra $A$ is strict if and only
if the quadriple $\mathbb{A}=(A,\theta,\alpha_{1},1)$ is a left
semiloop. This in particular implies that a 2-associative
protomodular operation $\theta$ is strict if and only if
$\mathbb{A}$ is a group.
\end{exmp}\vskip+2mm

\begin{rem}
Let $n\geq 2$. Any strict protomodular algebra which is either
1-associative or 2-associative is trivial. Indeed, let $A$ be a
strict 1-associative protomodular algebra. Consider any $a\in A$.
From (2.3), we obtain
\begin{equation}
\theta(a,a,...,a,a)=\theta(\theta(a,a,...,a,e_{1}),e_{2},...,e_{n},a).
\end{equation}

\noindent Since $\theta_{a}$ is bijective, $a=e_{2}$, and hence the
algebra $A$ is trivial.

Let now $A$ be a strict 2-associative algebra, and let $a_{1},a_{2},...,a_{n},b\in A$. From Lemma 3.18 below we obtain

\begin{equation}
\theta(a_{1},a_{2},...,a_{n},e_{i})=a_{i},
\end{equation}

\noindent for any $i$ $(1\leq i\leq n)$. Now take any $i$ and $j$
with $1\leq i<j\leq n$. (3.12) implies that
$$\theta(a_{1},a_{2},...,a_{i},...,a_{j},...,a_{n},e_{i})=\theta(a_{1},a_{2},...,a_{i},...,e_{j},...,a_{n},e_{i}).$$

\noindent Hence $a_{j}=e_{j}$, and
$A$ is trivial.
\end{rem}
\vskip+3mm

\begin{rem}
Remark 3.16 and Example 3.5 imply that the protomodular operation
(3.3) on a Boolean algebra  mentioned in Section 2 is strict if and
only if the algebra is trivial. One can show that, although the
semi-abelian operations on non-trivial Heyting algebras, Heyting
semi-lattices, and locally Boolean distributive lattices given in
\cite{J} and \cite{BC1} are not 2-associative, they are not strict.
\end{rem}
\vskip+2mm

From (2.3) we obtain

\begin{lem} Let $A$ be a 2-associative algebra. For any $a_{1},a_{2},...,a_{n}$ and $b\in A$, one has

$$\theta(a_{1},a_{2},...,a_{n},b)=\theta(\theta(a_{1},a_{2},...,a_{n},e_{1}),\theta(a_{1},a_{2},...,a_{n},e_{2}),...,$$
$$\theta(a_{1},a_{2},...,a_{n},e_{n}),b).$$

\end{lem}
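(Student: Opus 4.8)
The statement to prove is Lemma 3.18: in a 2-associative algebra $A$ (from a protomodular variety, with protomodular operation $\theta$ and the data $\alpha_i,e_i$), one has
$$\theta(a_1,\dots,a_n,b)=\theta(\theta(a_1,\dots,a_n,e_1),\dots,\theta(a_1,\dots,a_n,e_n),b)$$
for all $a_1,\dots,a_n,b\in A$.

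The plan is to apply the 2-associativity identity (3.1) in the form (b) of Lemma 3.1 to a cleverly chosen substitution of the $b_i$'s, and then simplify the inner occurrence of $\theta$ using the protomodular identity (2.3), namely $\theta(e_1,e_2,\dots,e_n,c)=c$. Concretely, I would start from the right-hand side of the asserted identity and recognize it as an instance of the RHS of (3.1): take the ``outer'' arguments to be $a_1,\dots,a_n$, take $b_i:=e_i$ for $1\le i\le n$, and take $c:=b$. Then the RHS of (3.1) reads
$$\theta(\theta(a_1,\dots,a_n,e_1),\theta(a_1,\dots,a_n,e_2),\dots,\theta(a_1,\dots,a_n,e_n),b),$$
which is exactly the right-hand side of the claimed identity. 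By 2-associativity this equals the LHS of (3.1), which with these substitutions is
$$\theta(a_1,\dots,a_n,\theta(e_1,e_2,\dots,e_n,b)).$$
Finally, identity (2.3) gives $\theta(e_1,\dots,e_n,b)=b$, so this last expression collapses to $\theta(a_1,\dots,a_n,b)$, which is the left-hand side of the claimed identity. Chaining the two equalities yields the lemma.

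There is essentially no obstacle here: the only thing to get right is the bookkeeping of which slot each variable occupies in (3.1), and making sure the substitution $b_i=e_i$, $c=b$ is legitimate (it is, since (3.1) holds for \emph{all} elements of $A$). One small point worth noting explicitly in the write-up is that (2.3) is available because $A$ lies in a protomodular variety, so (2.1) and (2.2) hold and hence (2.3) holds as recorded in Section 2; 2-associativity itself is only used through condition (b) of Lemma 3.1. The hint in the excerpt — ``From (2.3) we obtain'' — confirms that this is the intended route, so the proof is just the two-line computation: apply (3.1) with $b_i=e_i$ and $c=b$, then rewrite the inner $\theta$ via (2.3).
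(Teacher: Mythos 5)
Your proof is correct and is exactly the argument the paper intends: the paper's entire justification is the phrase ``From (2.3) we obtain,'' and your substitution $b_i=e_i$, $c=b$ into the 2-associativity identity (3.1), followed by collapsing the inner term via $\theta(e_1,\dots,e_n,b)=b$, is precisely that computation.
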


\vskip+3mm

\section{Associative Semi-Abelian Algebras and Their Groups}

From the fact that any set equipped with an associative binary
operation, which has a left identity and left inverses, is a group
(see, for instance, \cite{F}), it immediately follows that an
associative semi-abelian operation with $n=1$ is a group operation.
In this section we study the question when an algebraic theory of a
protomodular variety contains a group operation in the case of an
arbitrary $n$.

         From now on, unless specified otherwise, we will deal with an arbitrary protomodular
         variety.
         \vskip+2mm
         \begin{lem} Let $A$ be a 2-associative protomodular algebra. Then, for any $b,c\in A$, there is $a\in A$ such that
         \begin{equation}
\theta(a,a,...,a,b)=c.
         \end{equation}
\noindent  The element $a$ can be taken as
\begin{equation}
a=\theta(\alpha_{1}(c,\theta(b,b,...,b)),\alpha_{2}(\theta(c,\theta(b,b,...,b))...,\alpha_{n}(c,
\theta(b,b,...,b)),b).
\end{equation}
         \end{lem}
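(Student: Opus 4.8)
The plan is to produce the element $a$ explicitly, namely the one displayed in (4.2), and to verify (4.1) for it using nothing beyond the protomodular identity (2.2) and the 2-associativity identity (3.1). To keep the bookkeeping light, I would write $d=\theta(b,b,\dots,b)$ for the value of $\theta$ on $n+1$ copies of $b$, and set $\beta_i=\alpha_i(c,d)$ for $1\le i\le n$, so that the candidate is $a=\theta(\beta_1,\beta_2,\dots,\beta_n,b)$.

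The first step is to invoke the protomodular identity (2.2) with its two free variables specialized to $c$ and $d$; this yields $\theta(\beta_1,\beta_2,\dots,\beta_n,d)=c$, that is, $\theta(\beta_1,\dots,\beta_n,\theta(b,b,\dots,b))=c$. The second step is to expand the left-hand side by 2-associativity (3.1), applied with $a_i=\beta_i$ and with every $b_i$ as well as the distinguished last slot of the inner $\theta$ equal to $b$. Because all of $b_1,\dots,b_n$ and that last slot coincide with $b$, the right-hand side of (3.1) collapses to $\theta(\theta(\beta_1,\dots,\beta_n,b),\dots,\theta(\beta_1,\dots,\beta_n,b),b)$, i.e.\ to $\theta(a,a,\dots,a,b)$. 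Chaining the two steps gives $\theta(a,a,\dots,a,b)=c$, which is precisely (4.1); in particular such an $a$ exists and may be taken as in (4.2).

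I do not anticipate a genuine obstacle: the whole argument is one instantiation of (2.2) followed by one application of (3.1), and neither (2.1) nor (2.3) is needed (protomodularity enters only through the existence of the $\alpha_i$ satisfying (2.2)). The one point to watch is that the inner term $\theta(b,\dots,b)$ must carry exactly $n+1$ copies of $b$, so that both the $b_i$-arguments and the last argument of (3.1) are filled by $b$ and the collapse above really occurs. As a consistency check, for $n=1$ in the variety of groups one has $d=b^2$, $\beta_1=c\,b^{-2}$, $a=\beta_1 b=c\,b^{-1}$, and indeed $\theta(a,b)=ab=c$.
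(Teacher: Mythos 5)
Your argument is correct, and it is precisely the computation that the paper declares to be ``immediate from (3.1) and (2.2)'' but does not write out: instantiate (2.2) at $a=c$, $b=\theta(b,\dots,b)$ to get $\theta(\beta_1,\dots,\beta_n,\theta(b,\dots,b))=c$ with $\beta_i=\alpha_i(c,\theta(b,\dots,b))$, then apply (3.1) with all $b_i$ and the last slot equal to $b$ so that the right-hand side collapses to $\theta(a,\dots,a,b)$ for $a=\theta(\beta_1,\dots,\beta_n,b)$. This is exactly formula (4.2) (modulo an evident typo in the paper's display, where one $\alpha_i(c,\theta(b,\dots,b))$ is printed as $\alpha_i(\theta(c,\theta(b,\dots,b))$), and your observation that the inner $\theta(b,\dots,b)$ carries $n+1$ copies of $b$ is the right point to watch. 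The proof the paper actually writes out is different: it only establishes existence, by noting that $F(A)\subseteq Map(A^n,A)$ is closed under the induced operation, so $\theta_b(\theta_b,\dots,\theta_b)$ equals some $\theta_d$ and is therefore surjective; solving $\theta_b(\theta_b(\vec a),\dots,\theta_b(\vec a))=c$ then yields $a=\theta(a_1,\dots,a_n,b)$. That route avoids the cumbersome formula but does not produce the explicit witness (4.2); yours does, which is what is actually needed later for the inversion formula (4.4). Your identification of exactly which identities are used (only (2.2) and (3.1), not (2.1) or (2.3)) and the $n=1$ group-theoretic sanity check are both sound.
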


         \begin{proof} The statement of this lemma immediately follows from the
identities (3.1) and (2.2). Below we give another proof of the
existence, it enables us to avoid cumbersome formulae.

Since the mapping $F:A\rightarrow Map(A^{n},A)$, $b\mapsto
\theta_{b}$, preserves $\theta$, $F(A)$ is closed in $Map(A^{n},A)$
under the operation introduced in Section 3.
         Therefore, for any $b\in A$
, the mapping
$f=\theta_{b}(\theta_{b},\theta_{b},...,\theta_{b}):A^{n}\rightarrow
A$ lies in $F(A)$, and hence is equal to $\theta_{d}$ for some $d\in
A$. This implies that $f$ is surjective. Then, for any $c\in A$,
there is $(a_{1},a_{2},...,a_{n})\in A^{n}$ such that
$$\theta_{b}(\theta_{b}(a_{1},a_{2},...,a_{n}),\theta_{b}(a_{1},a_{2},...,a_{n}),...,\theta_{b}(a_{1},a_{2},...,a_{n}))=c.$$

\noindent This implies (4.1) for

$$a=\theta(a_{1},a_{2},...,a_{n},b).$$





\end{proof}

      One can easily verify
      \begin{lem}Let $A$ be a 2-associative protomodular algebra. Then the binary operation
      \begin{equation}
      ab=\theta(a,a,...,a,b).
      \end{equation}

\noindent is associative. \end{lem}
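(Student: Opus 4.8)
The plan is to compute both $(ab)c$ and $a(bc)$ by unravelling the definition $xy=\theta(x,x,\dots,x,y)$ and then bringing them to a common form using 2-associativity (identity (3.1)) together with the basic identity (2.3) (equivalently, Lemma 3.18). First I would write
$$
a(bc)=\theta(a,a,\dots,a,\theta(b,b,\dots,b,c)).
$$
Applying 2-associativity (3.1) with all $a_i=a$ and all $b_i=b$, this equals
$$
\theta\bigl(\theta(a,a,\dots,a,b),\theta(a,a,\dots,a,b),\dots,\theta(a,a,\dots,a,b),c\bigr)=\theta(ab,ab,\dots,ab,c)=(ab)c,
$$
since every slot of the outer $\theta$ receives the same argument $\theta(a,\dots,a,b)=ab$. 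So the identity is essentially immediate from 2-associativity applied to the ``constant'' tuples.

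The one point to be slightly careful about is that the statement of condition (b) of Lemma 3.1 has the form $\theta(a_1,\dots,a_n,\theta(b_1,\dots,b_n,c))=\theta(\theta(a_1,\dots,a_n,b_1),\dots,\theta(a_1,\dots,a_n,b_n),c)$, and here I am instantiating $a_1=\dots=a_n=a$ and $b_1=\dots=b_n=b$; the right-hand side then collapses because each $\theta(a,\dots,a,b_i)$ is the same element $\theta(a,\dots,a,b)$, which is exactly $ab$ by definition (4.3). There is no obstacle — the identity (3.1) holds in any 2-associative algebra by hypothesis, and protomodularity is not even needed for this particular lemma (it is stated for a 2-associative protomodular algebra only because that is the running context).

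Thus the main ``work'' is purely a substitution into identity (3.1), and the hardest part is merely bookkeeping: recognizing that the $n$-fold repetition of $a$ in the outer operation and of $b$ in the inner operation is what makes the right-hand side of (3.1) fold up into $\theta(ab,\dots,ab,c)$. I would present this in two or three lines of display math, citing (3.1) for the single nontrivial step.
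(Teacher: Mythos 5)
Your proof is correct and is precisely the ``easy verification'' the paper alludes to (the paper gives no explicit proof of this lemma): instantiating the 2-associativity identity (3.1) with $a_1=\dots=a_n=a$ and $b_1=\dots=b_n=b$ turns $a(bc)$ into $\theta(ab,ab,\dots,ab,c)=(ab)c$ in one step. Your side remark that protomodularity is not actually used here is also accurate.
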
\vskip+3mm

    \begin{prop} Let $\theta$ be a 2-associative semi-abelian operation on an algebra $A$.
    Then  $A$   with the binary operation defined by (4.3) is a group. For any $b\in A$, we have

\begin{equation}
b^{-1}=
\end{equation}
$$=\theta(\alpha_{1}(e,\theta(b,b,...,b)),\alpha_{2}(\theta(e,\theta(b,b,...,b))...,\alpha_{n}(e,
\theta(b,b,...,b)),b).$$
    \end{prop}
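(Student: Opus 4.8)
The plan is to show that the semigroup $(A, \cdot)$ with $a\cdot b = \theta(a,a,\dots,a,b)$ is a group by exhibiting a two-sided identity and two-sided inverses; associativity is already granted by Lemma 4.3. First I would establish that $e$ is a left identity: by identity (2.3) with all $e_i = e$ (the semi-abelian case), $\theta(e,e,\dots,e,b) = b$, which says exactly $e\cdot b = b$. Next I would show $e$ is also a right identity. For this I would use Lemma 4.1 to produce, for each $b$, an element $a$ with $\theta(a,a,\dots,a,b) = b$, i.e. $a\cdot b = b$; left-multiplying such a relation and using associativity together with the left-identity property should force $a = e$ on the relevant side, and then a standard semigroup argument (left identity plus left inverses gives a group, hence the identity is two-sided) closes this. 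Alternatively — and this is cleaner — I would invoke Lemma 4.1 directly with $c = e$ to get an element $a$ with $a\cdot b = e$, establishing left inverses relative to the left identity $e$; then the classical fact quoted at the start of Section 4 (a set with an associative operation admitting a left identity and left inverses is a group) applies verbatim.

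Having shown $(A,\cdot)$ is a group, it remains to identify the inverse of a given $b$ explicitly as the element in (4.4). Here I would simply specialize the formula (4.2) of Lemma 4.1, which produces an $a$ satisfying $\theta(a,a,\dots,a,b) = c$, to the case $c = e$ and $e_i = e$ for all $i$: the displayed expression in (4.2) then becomes precisely the right-hand side of (4.4), and it satisfies $a\cdot b = \theta(a,a,\dots,a,b) = e$. Since in a group a left inverse is the inverse, this $a$ equals $b^{-1}$, as claimed.

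The only genuinely delicate point is making sure the left-identity/left-inverse packaging is correctly oriented: Lemma 4.1 gives $a$ with $a\cdot b = c$ (the unknown is the \emph{left} factor), and (2.3) gives $e$ as a \emph{left} identity, so the two match up and the classical theorem applies without transposition. I expect no real obstacle beyond this bookkeeping, since all the structural work — associativity (Lemma 4.3), solvability of $a\cdot b = c$ in $a$ (Lemma 4.1), and the explicit witness (4.2) — has already been done; the proposition is essentially the act of assembling these pieces and reading off the inverse formula.
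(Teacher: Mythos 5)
Your proposal is correct and follows essentially the same route as the paper: $e$ is a left identity by (2.3), associativity comes from the lemma on the binary operation (Lemma 4.2 in the paper, not 4.3), left inverses come from Lemma 4.1 with $c=e$, and the classical fact from \cite{F} plus the explicit witness (4.2) yields the group structure and the formula (4.4). No gaps.
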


     \begin{proof} The equality (2.3) implies that $e$ is a left unit.
     Now it suffices  to apply  Lemma 4.1, Lemma 4.2 and the above-mentioned statement from \cite{F}.
     \end{proof}\vskip+3mm

\begin{rem}
Note that, for any 2-associative protomodular operation $\theta$,
the operation given by (4.3) does not in general define the group
structure. For the counter-example take the protomodular operation
 given by (3.3) on a Boolean algebra.
\end{rem}

Proposition 4.3 immediately gives rise to

\begin{cor} In the conditions of Proposition 4.3, we
have:

(a) $\theta(a,a,...,a,e)=a$, for any  $a\in A$;

(b) for any $b,c\in A$, there is a unique $a\in A$ with
$\theta(a,a,...,a,b)=c$;\vskip+2mm

(c) for any $a,c\in A$, there is a unique $b\in A$ with
$\theta(a,a,...,a,b)=c.$
\end{cor}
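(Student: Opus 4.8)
The plan is to reduce everything to the group $(A,\cdot)$ furnished by Proposition 4.3, where by (4.3) we have $a\cdot b=\theta(a,a,\dots,a,b)$ and the unit is $e$. Once this translation is in place, each of (a), (b), (c) becomes a standard fact about groups, so the proof is essentially immediate.

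For (a), I would observe that $\theta(a,a,\dots,a,e)=a\cdot e$; since $(A,\cdot)$ is a group, the left unit $e$ (a left unit by (2.3)) is automatically a two-sided unit, whence $a\cdot e=a$.

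For (b), the equation $\theta(a,a,\dots,a,b)=c$ reads $a\cdot b=c$ in the group; fixing $b$ and $c$, it has the solution $a=c\cdot b^{-1}$, and this is the only one because right multiplication by $b$ is a bijection of $A$ (equivalently, by right cancellation). Likewise, for (c) the same equation, now viewed with $a$ and $c$ fixed, has the unique solution $b=a^{-1}\cdot c$, left multiplication by $a$ being a bijection. Note that the existence halves of (b) and (c) were already obtained in Lemma 4.1; the new content here is uniqueness, which is exactly the cancellativity of a group.

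There is no real obstacle: the statement follows at once from Proposition 4.3 together with elementary group theory. The only point worth making explicit is that the passage from "left unit and left inverses'' to "two-sided unit and cancellation'' is legitimate precisely because Proposition 4.3 has already upgraded $(A,\cdot)$ to a genuine group.
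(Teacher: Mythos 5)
Your proof is correct and matches the paper's intent exactly: the paper presents this corollary as an immediate consequence of Proposition 4.3, and your translation of each part into the statements $a\cdot e=a$, unique solvability of $a\cdot b=c$ in $a$, and unique solvability in $b$, is precisely that argument.
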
\vskip+2mm

\vskip+2mm
\begin{theo}
For a variety $\textbf{V}$ of universal algebras, the following
conditions are equivalent:\vskip+2mm

(i) An algebraic theory of $\textbf{V}$ contains a group
operation;\vskip+2mm

(ii) An algebraic theory of $\textbf{V}$ contains a constant and a
Mal'cev operation $\mu$ which is associative in the sense of
\cite{JP}, i.e. satisfies the following identity

$$\mu(a,b,\mu(c,d,x))=\mu(\mu(a,b,c),d,x);$$
\vskip+2mm

(iii) An algebraic theory of $\textbf{V}$ contains a semi-abelian
operation which is 2-associative; \vskip+2mm

(iv) An algebraic theory of $\textbf{V}$ contains a protomodular
operation $\theta$ which satisfies the following identity

$$\theta(\alpha_{1}(a_{1},a_{2}),...,\alpha_{n}(a_{1},a_{2}),\theta(\alpha_{1}(b_{1},b_{2}),...,\alpha_{n}(b_{1},b_{2}),c))=$$
$$=\theta(\alpha_{1}(\theta(\alpha_{1}(a_{1},a_{2}),...,\alpha_{n}(a_{1},a_{2}),b_{1}),b_{2}),$$
$$\alpha_{2}(\theta(\alpha_{1}(a_{1},a_{2}),...,\alpha_{n}(a_{1},a_{2}),b_{1}),b_{2}),$$
$$...$$
$$\alpha_{n}(\theta(\alpha_{1}(a_{1},a_{2}),...,\alpha_{n}(a_{1},a_{2}),b_{1}),b_{2}),c),$$
\vskip+2mm

\noindent for the corresponding binary operations $\alpha_{i}$.
\end{theo}

\begin{proof} The equivalence of (i) and (ii) was given in \cite{JP}; we already have mentioned it in the Introduction.
The equivalence of (i) and (iii) immediately
follows from Proposition 4.3.  For (i)$\Leftrightarrow$(iv) let us
observe that a protomodular variety has the Mal'cev term
$\mu(a,b,c)=\theta(\alpha_{1}(a,b),\alpha_{2}(a,b),...,\alpha_{n}(a,b),c)$.
\end{proof}

According to Proposition 4.3 we have the functor
$$R:2-Ass\textbf{V}\rightarrow \textbf{Grp}$$

\noindent where $\textbf{V}$ is a semi-abelian variety,
$2$-$Ass\textbf{V}$ denotes the category of 2-associative
$\textbf{V}$-algebras, while $\textbf{Grp}$ denotes the category of
groups; $R$ sends a 2-associative $\textbf{V}$-algebra to itself
with the group structure introduced above. The functor $R$ obviously
has a left adjoint.

\vskip+3mm
 At the end of the paper we give the description of 2-associative $\overline{\textbf{V}}_{n}$-algebras as groups
with some additional structure.
      Let $n$ be a natural number with $n\geq 2$. We define an $n$-enriched group as a triple $(G,\gamma, (\alpha_{i})_{1\leq i\leq n})$, where $G$ is a group,
      $\gamma$ is a mapping (not necessarily a homomorphism) $G^{n}\rightarrow G$, and $\alpha_{i}$ is a binary operation on $G$, such that
      $$\gamma(\alpha_{1}(a,b), \alpha_{2}(a,b),...,\alpha_{n}(a,b))b=a;$$
      $$\alpha_{i}(a,a)=e,$$

\noindent and  the following  distributivity condition is satisfied:
$$\gamma(a_{1},a_{2},...,a_{n})\gamma(b_{1},b_{2},...,b_{n})=$$
$$=\gamma(\gamma(a_{1},a_{2},...,a_{n})b_{1},
\gamma(a_{1},a_{2},...,a_{n})b_{2},...,\gamma(a_{1},a_{2},...,a_{n})b_{n}).$$\vskip+3mm

      Let $\textbf{EnrGrp}_{n}$ be the category,
      whose objects  are $n$-enriched groups and morphisms are group homomorphisms preserving $\gamma$ and all $\alpha_{i}$.

    Lemma 3.18 implies
    \begin{theo} The categories $2$-$Ass\overline{\textbf{V}}_{n}$ and $\textbf{EnrGrp}_{n}$ are isomorphic.
    \end{theo}
    \begin{proof} Let $F:2$-$Ass\overline{\textbf{V}}_{n}\rightarrow \textbf{EnrGrp}_{n}$
be the  functor sending an algebra $A$ to the set $A$ with the group
structure described above. Moreover, let

$$\gamma(a_{1},a_{2},...,a_{n})=\theta(a_{1},a_{2},...,a_{n},e).$$

\noindent Then, as it follows from Corollary 4.5(a), $(G,\gamma,
(\alpha_{i})_{1\leq i\leq n})$ is an enriched group. Consider the
functor
$$G:\textbf{EnrGrp}_{n}\rightarrow 2-Ass\overline{\textbf{V}}_{n},$$

\noindent sending $(G,\gamma, (\alpha_{i})_{1\leq i\leq n})$ to the
set $G$ equipped with the $(n+1)$-ary operation defined as

$$\theta(a_{1},a_{2},...,a_{n},b)=\gamma(a_{1},a_{2},...,a_{n})b.$$

\noindent Then $G$ turns into a 2-associative
$\overline{\textbf{V}}_{n}$-algebra. One can easily verify that $F$
and $G$ are the mutually inverse functors.
\end{proof}

\section{Acknowledgment}
This work is supported by Shota Rustaveli  National Science
Foundation (Ref.: FR-18-10849).

Authors address: \vskip+2mm

A. Razmadze Mathematical Institute of Tbilisi State University

6 Tamarashvili Str., Tbilisi, 0177, Georgia

E-mail: dalizan@rmi.ge

\end{document}